\author{Giles Gardam}
\title{A counterexample to the unit conjecture for group rings}
\theoremstyle{plain}
\newtheorem{thm}{Theorem}
\newtheorem{cor}[thm]{Corollary}
\newtheorem{lem}{Lemma}
\theoremstyle{definition}
\newtheorem*{rmk}{Remark}
\newtheorem{qn}{Question}
\newcommand{\C}{\mathbb{C}}
\newcommand{\F}{\mathbb{F}}
\newcommand{\N}{\mathbb{N}}
\newcommand{\Z}{\mathbb{Z}}
\newcommand{\gp}[2]{\langle \, #1 \, | \, #2 \, \rangle}
\newcommand{\gen}[1]{\langle #1 \rangle}
\newcommand{\abs}[1]{\left\lvert#1\right\rvert}
\newcommand{\half}{\frac{1}{2}}
\DeclareMathOperator{\supp}{supp}
\newcommand{\csupp}[1]{\abs{\supp{ #1 }}}
\newcommand{\overbar}[1]{\mkern 1.5mu\overline{\mkern-1.5mu#1\mkern-1.5mu}\mkern 1.5mu}
\newcommand{\ov}{\overbar{v}}
\newcommand{\ow}{\overbar{w}}
\newcommand{\oz}{\overbar{z}}
\address{Mathematisches Institut, Universität Münster, Einsteinstr.~62, 48149 Münster, Germany}
\email{ggardam@uni-muenster.de}
\keywords{Group rings, unit conjecture}
\subjclass[2010]{20C07 (16S34, 16U60)}
\dedicatory{To the memory of Willem Henskens}
\begin{document}

\begin{abstract}
    The unit conjecture, commonly attributed to Kaplansky, predicts that if $K$ is a field and $G$ is a torsion-free group then the only units of the group ring $K[G]$ are the trivial units, that is, the non-zero scalar multiples of group elements.
    We give a concrete counterexample to this conjecture; the group is virtually abelian and the field is order two.
\end{abstract}

\maketitle

\section{Introduction}

Three long-standing open problems on the group rings of torsion-free groups are commonly attributed to Kaplansky: the unit conjecture, the zero divisor conjecture and the idempotent conjecture.
Let $K$ be a field and $G$ be a torsion-free group and consider the group ring $K[G]$.
The unit conjecture states that every unit in $K[G]$ is of the form $k g$ for $k \in K \setminus \{ 0 \}$ and $g \in G$, the zero divisor conjecture states that $K[G]$ has no non-trivial zero divisors, and the idempotent conjecture states that $K[G]$ has no idempotents other than $0$ and $1$.
The unit conjecture implies the zero divisor conjecture \cite[Lemma 13.1.2]{Passman85}, which in turn implies the idempotent conjecture; these implications hold for each individual group ring $K[G]$.

In this paper we disprove the unit conjecture.
\begin{thm}
    \label{thm:nontrivial_unit}
    Let $P$ be the torsion-free group defined by the presentation $\gp{a, b}{b^{-1} a^2 b = a^{-2}, \, a^{-1} b^2 a = b^{-2}}$ and set $x = a^2, y = b^2, z = (ab)^2$. Set
    \begin{align*}
        p &= (1 + x) (1 + y) (1 + z^{-1}) \\
        q &= x^{-1} y^{-1}  + x + y^{-1} z + z \\
        r &= 1 + x + y^{-1} z + xyz \\
        s &= 1 + (x + x^{-1} + y + y^{-1}) z^{-1}.
    \end{align*}
    Then $p + qa + rb + sab$ is a non-trivial unit in the group ring $\F_2 [P]$.
\end{thm}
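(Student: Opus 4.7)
The plan is to verify that $\alpha := p + qa + rb + sab$ is a unit by exhibiting a specific candidate inverse $\beta = p' + q'a + r'b + s'ab$ (with $p', q', r', s' \in \F_2[N]$, where $N = \gen{x, y, z}$) and checking directly that $\alpha\beta = 1$ in $\F_2[P]$.

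First I would set up the computational framework. One checks from the defining relations that $N$ is normal in $P$ and isomorphic to $\Z^3$, with $P/N \cong \Z/2 \oplus \Z/2$ and transversal $\{1, a, b, ab\}$. The conjugation action of the transversal on $N$ is by sign changes on the Laurent generators: the relation $b^{-1}a^2 b = a^{-2}$ says $b$ inverts $x$, the relation $a^{-1}b^2 a = b^{-2}$ says $a$ inverts $y$, and a short calculation using $(ab)^2 = z$ shows that $a$ and $b$ both invert $z$ while $ab$ centralises it. Together with the cocycle values $a^2 = x$, $b^2 = y$, $(ab)^2 = z$, and the expression of $ba$ as a fixed element of $N \cdot ab$ (again obtained from the relations), this exhibits $\F_2[P]$ as a crossed product of the commutative Laurent polynomial ring $\F_2[N] \cong \F_2[x^{\pm 1}, y^{\pm 1}, z^{\pm 1}]$ by the Klein four group, reducing multiplication to an entirely mechanical operation.

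Torsion-freeness of $P$ is the classical theorem of Promislow and can be assumed; nontriviality of $\alpha$ is immediate from the size of its support.

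The remaining task is to produce $\beta$ and then to verify the product. Once $\beta$ is in hand, expanding $\alpha\beta$ and sorting terms by transversal element reduces $\alpha\beta = 1$ to four identities in the commutative ring $\F_2[N]$, each of which is a lengthy but routine Laurent polynomial check. The main obstacle --- and the whole content of the theorem --- is finding $\beta$ at all: there is no a priori structural reason for $\alpha$ to be a unit, and $\beta$ must be produced by explicit search, for instance by positing a candidate support for $(p', q', r', s')$ and solving the resulting system of $\F_2$-linear equations obtained by equating coefficients in $\alpha\beta = 1$ coset by coset.
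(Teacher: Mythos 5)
Your setup is sound and matches the paper's: $\F_2[P]$ is a crossed product of $\F_2[x^{\pm 1}, y^{\pm 1}, z^{\pm 1}]$ by the Klein four group, with the conjugation action and cocycle you describe, so products decompose into four Laurent polynomial identities per transversal element. But the proposal stops exactly where the proof has to start: you never actually exhibit the inverse $\beta$, you only say that it ``must be produced by explicit search'' and that this is ``the main obstacle.'' A proof cannot defer its main obstacle. As written, what you have is a reduction to a finite check, not the check itself, and since the whole point of the theorem is that this particular $\alpha$ \emph{is} invertible, a plan that ends with ``find $\beta$ somehow'' has a genuine gap rather than a routine omission. Your suggested search strategy also undersells the difficulty: without an a priori bound on $\supp(\beta)$ the linear system you describe has an unbounded number of unknowns, so ``positing a candidate support'' is not obviously a terminating procedure.

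The paper avoids all of this because, contrary to your claim that ``there is no a priori structural reason for $\alpha$ to be a unit,'' there is one, and finding it is the real content. After adjoining $x^{1/2}$ and $y^{1/2}$, the rescaled polynomials $p_0 = x^{-1/2}y^{-1/2}p$, $q_0 = y^{1/2}q$, $r_0 = x^{-1/2}r$, $s_0 = s$ are each invariant under $x \mapsto x^{-1}, y \mapsto y^{-1}$. This symmetry forces the candidate inverse to be the simple twist $\alpha' = x^{-1}p^a - x^{-1}q\,a - y^{-1}r\,b + z^{-1}s^a\,ab$ of $\alpha$ itself, makes two of the four coset-identities vanish automatically, and collapses the other two to the short identities \eqref{align:ab} and \eqref{align:1} of Lemma~\ref{lem:symmetry}, which can then be verified by elementary manipulations of $\ov, \ow, \oz$. (The paper also handles a point you skip: checking $\alpha'\alpha = 1$ alone yields a two-sided inverse only because $P$ has no zero divisors, being virtually abelian.) So the two approaches agree on the bookkeeping but diverge precisely on the one idea the theorem needs; without the symmetry lemma or an explicit $\beta$ in hand, the proposal does not establish the result.
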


The unit conjecture and the zero divisor conjecture were formulated by Higman in his unpublished 1940 thesis \cite[p.~77]{Higman40b} (see also \cite[p.~112]{Sandling81}), in which he proved the unit conjecture for locally indicable groups (the corresponding paper \cite{Higman40a} was written earlier and omits the conjectures).
The zero divisor conjecture appeared in print in the report of a 1956 talk of Kaplansky \cite[Problem 6]{Kaplansky57}; for integral group rings it appears in the original 1965 Kourovka Notebook as a ``well-known problem'' \cite[1.3]{Kourovka18}.
The unit conjecture was later posed alongside the zero divisor conjecture by Kaplansky \cite{Kaplansky70} having also been asked for integral group rings by Smirnov and Bovdi.

If the unit conjecture were true for $\F_2 [P]$ then its group of units would be isomorphic to $P$ and in particular ``small'' in various senses, such as being finitely generated and virtually abelian; this prediction also fails.
\begin{cor}
    \label{cor:big_group_of_units}
    The group of units of $\F_2 [P]$ is a torsion-free linear group that is not finitely generated and contains non-abelian free subgroups.
\end{cor}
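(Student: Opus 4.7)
The plan is to derive the four asserted properties of $\F_2[P]^\times$ from the non-trivial unit $u = p + qa + rb + sab$ of Theorem~\ref{thm:nontrivial_unit} together with the structure of $P$. Writing $N = \gen{x, y, z}$, a direct check from the defining relations shows that $N$ is normal in $P$, isomorphic to $\Z^3$, with quotient $P/N \cong (\Z/2)^2$ and transversal $\{1, a, b, ab\}$. Hence $\F_2[P]$ is a free module of rank four over the commutative Noetherian domain $\F_2[N] \cong \F_2[x^{\pm 1}, y^{\pm 1}, z^{\pm 1}]$, and $p, q, r, s$ are precisely the coordinates of $u$ in the basis $\{1, a, b, ab\}$.

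Linearity is then immediate: setting $K = \operatorname{Frac}(\F_2[N]) = \F_2(x, y, z)$, the tensor product $\F_2[P] \otimes_{\F_2[N]} K$ is a four-dimensional $K$-algebra, so $\F_2[P]^\times$ embeds in $\operatorname{GL}_4(\overline K)$. For torsion-freeness, by Cliff's theorem (extending Farkas--Snider to positive characteristic) $\F_2[P]$ is a domain, whence $D := \F_2[P] \otimes_{\F_2[N]} K$ is a four-dimensional $K$-division algebra. Any torsion element of $\F_2[P]^\times$ would generate a finite subfield $\F_{2^d} \subseteq \F_2[P] \subseteq D$, and I would rule this out by combining the fact that $K$ is purely transcendental over $\F_2$ (so contains no non-trivial roots of unity) with the observation that in any domain of characteristic two the only $2$-power-order unit is $1$.

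For the last two assertions I would exploit $u$ as a source of many units. Applying the Tits alternative to the linear representation above, the existence of a non-abelian free subgroup reduces to showing that $\F_2[P]^\times$ is not virtually solvable; concretely, I would attempt a ping-pong argument on $\overline K^4$ using $u$ together with a conjugate $gug^{-1}$ for some $g \in P$ not commuting with $u$. For failure of finite generation, I would equip $\F_2[P]$ with a filtration by total degree in $x, y, z$ and show that the conjugates $x^n u x^{-n}$ have unbounded filtration degree as $n$ varies in $\Z$; since any finitely generated subgroup of $\F_2[P]^\times$ must lie in a bounded level of the filtration, the full unit group cannot be finitely generated.

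The main obstacle, I expect, is the torsion-freeness step: ruling out $\F_{2^d} \hookrightarrow \F_2[P]$ requires a direct argument inside the four-dimensional division algebra $D$ and is not immediately supplied by standard domain-theoretic results. The failure of finite generation is a secondary difficulty, in that the filtration must behave well under both the multiplication in $\F_2[P]$ and conjugation by $N$ for the argument to go through.
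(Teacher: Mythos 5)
Your linearity step matches the paper (embed $\F_2[P]$ into $M_4$ over $\F_2[x^{\pm 1},y^{\pm 1},z^{\pm 1}]$), but the other three steps diverge, and two of them have genuine problems.

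For torsion-freeness you take a detour through division algebras and finite subfields; the paper's argument is far simpler and you should compare: if $\beta\neq 1$ and $\beta^n=1$, the absence of zero divisors (Cliff) forces $1+\beta+\dots+\beta^{n-1}=0$. For $n=2$ this says $\beta=1$ outright; for $n$ odd, applying the augmentation $\F_2[P]\to\F_2$ (which sends the unit $\beta$ to $1$) gives $n\bmod 2=1\neq 0$, a contradiction, and every nontrivial torsion element has a power of order $2$ or of odd prime order. Your route via a $4$-dimensional $K$-division algebra and ruling out $\F_{2^d}\hookrightarrow D$ is substantially harder to close, as you yourself flag.

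The non-finite-generation argument does not work. The claim that a finitely generated subgroup of $\F_2[P]^\times$ must sit inside a bounded level of the total-degree filtration is false: already the cyclic subgroup $\gen{x}$ has unbounded degree, and more to the point your witnesses $x^n u x^{-n}$ all lie in the $2$-generated subgroup $\gen{x,u}$, so their unbounded degree says nothing about finite generation of $E$. The correct mechanism in the paper is to pass to the infinite dihedral quotient $\pi\colon P\to D_\infty$ and use Mirowicz's theorem that $(\F_2[D_\infty])^\times \cong (*_{j\in\Z}U_j)\rtimes D_\infty$ with $U_j$ infinite elementary abelian; one builds from the construction a family of units $\alpha_k$ whose images under $\pi$ are the distinct basis elements $e_{4k+2,0}\in U_0$, and since $(\F_2[D_\infty])^\times$ retracts onto any subgroup of $U_0$, this exhibits $\oplus_{\N^+}\Z/2$ as a retract of $\pi(E)$, hence $E$ is not finitely generated.

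The free-subgroup step via Tits alternative is in principle available (since $E$ is linear and torsion-free), but you have replaced the target statement by the task of showing $E$ is not virtually solvable and then sketched a ping-pong argument without any concrete dynamics; that is essentially the whole difficulty, not a reduction of it. The paper again uses the dihedral quotient: the images of three explicit conjugates of the unit $\alpha$ are $e_{2,2},e_{2,0},e_{2,-2}$, which generate a copy of $\Z/2*\Z/2*\Z/2$ inside $*_{j}U_j$, and this group visibly contains non-abelian free subgroups. This is concrete and short; your sketch is a plan rather than a proof.

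In summary: linearity agrees with the paper; torsion-freeness is correct in spirit but needlessly heavy; the ``not finitely generated'' argument is incorrect as stated; and the free-subgroup argument is too vague to assess and amounts to assuming the hard part. The key idea you are missing is to push the unit (and its natural one-parameter family of variants) down to $\F_2[D_\infty]$, where the unit group is completely understood.
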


Since this paper first appeared in preprint form, Murray has extended the construction of Theorem~\ref{thm:nontrivial_unit} to give non-trivial units in $\F_d[P]$, for $d$ an arbitrary prime \cite{Murray21}.
This in particular rules out the possibility of giving a model-theoretic compactness proof of the unit conjecture for $\C[P]$ by first establishing it for many finite fields.

In Section~\ref{section:context} we provide further context, explaining our focus on the group $P$.
The proofs are given in Section~\ref{section:counterexample} in 3 parts: deriving how to compute in $K[P]$ in terms of $K[\Z^3]$, giving a criterion for \emph{piecewise symmetric} units, then applying this to prove Theorem~\ref{thm:nontrivial_unit} and Corollary~\ref{cor:big_group_of_units}.
Finally, we briefly discuss open problems in Section~\ref{section:open_problems}.

\begin{rmk}
    After the fact, with the unit and its inverse in hand, Theorem~\ref{thm:nontrivial_unit} is of course readily verified using computer algebra.
    Since it admits a short human-readable proof, we present such a proof.
\end{rmk}

\section{Background}
\label{section:context}

The zero divisor conjecture and the idempotent conjecture have turned out to be susceptible to analytic and $K$-theoretic methods, despite having been posed with very little evidence; for example, the zero divisor conjecture holds for elementary amenable groups \cite[Theorem 1.4]{KrophollerLinnellMoody88} and holds over $\C$ for groups satisfying the (strong) Atiyah conjecture (see \cite{Linnell93}, \cite[Lemma 10.39]{Lueck02}), and the idempotent conjecture over $\C$ follows from either the Baum--Connes or Farrell--Jones conjecture (see \cite[\S 6.3]{Valette02} and \cite[Theorem 1.12]{BartelsLueckReich08} respectively).
These stronger conjectures are in turn known to hold for different and quite large classes of groups; for instance, the Baum--Connes conjecture is known for hyperbolic \cite{MineyevYu02} \cite{Lafforgue02} and amenable (even a-T-menable) groups \cite{HigsonKasparov01} and the Farrell--Jones conjecture is known for hyperbolic and CAT(0) groups \cite{BartelsLueck12} as well as lattices in virtually connected Lie groups \cite{KammeyerLueckRueping16}.
In contrast, the unit conjecture has only been established as a consequence of the stronger combinatorial and purely group-theoretic property of having unique products.
A group $G$ is said to \emph{have unique products} if for every choice of non-empty finite subsets $A, B \subset G$ the set $A \cdot B = \{ a b : a \in A, b \in B \}$ contains some element uniquely expressible as $a b$ for $a \in A, b \in B$.
This implies the \emph{a~priori} stronger ``two unique products'' property \cite{Strojnowski80} and thus the unit conjecture.

The first example of a torsion-free group without unique products was constructed by Rips and Segev using small cancellation techniques \cite{RipsSegev87}.
Shortly thereafter, Promislow provided an elementary example \cite{Promislow88} in the (torsion-free) virtually abelian group $P$ variously known as the Hantzsche--Wendt group, Promislow group, or Fibonacci group $F(2, 6)$.
The group $P$ is the unique torsion-free 3-dimensional crystallographic group with finite abelianization; specifically, it is a non-split extension \[
    1 \to \Z^3 \to P \to \Z/2 \times \Z/2 \to 1.
\]
In Promislow's example we have $A = B$ and $\abs{A} = 14$.

The group $P$ satisfies the Farrell--Jones conjecture (indeed, it is the fundamental group of the flat manifold of Hantzsche--Wendt so it is covered by the original work of Farrell--Jones \cite{FarrellJones93} for integral coefficients and by \cite{BartelsLueck12} for general coefficients)  so one interpretation of Theorem~\ref{thm:nontrivial_unit} is that the unit conjecture really is immune to algebraic $K$-theory after all.
However, it is important to note that the Farrell--Jones conjecture for a group $G$ implies that a certain stable version of the unit conjecture holds, whereby every unit can be trivialized in $K_1(K[G]) = \operatorname{GL}(K[G])^{\operatorname{ab}}$, as the corresponding Whitehead group $\operatorname{Wh}^{K}(G)$ is trivial (see \cite[Theorem 1.2(iii)]{BartelsLueckReich08} and the discussion preceding it).

Craven and Pappas attacked the question of whether group rings of $P$ have non-trivial units, filtering potential units according to a complexity measure called length $L$ that comes from the word length in the infinite dihedral quotient of $P$.
They established the unit conjecture for $P$ amongst elements of length $L \leq 3$ which as an application shows that Promislow's set does not support a non-trivial unit \cite[Theorem 12.1]{CravenPappas13}.
We have to entertain the possibility that the unit conjecture is equivalent to unique products; if it is, and failure of unique products begets non-trivial units, then the aforementioned theorem shows that this cannot happen ``locally'' in the corresponding sense.
Our counterexample has length $L = 4$ (after conjugation by $a$).

It appears plausible that non-trivial units in $\F_2 [P]$ always have support of cardinality at least $21$, as in Theorem~\ref{thm:nontrivial_unit}.
This is prohibitively large for previous strategies to find such a unit, which are subject to some form of combinatorial explosion or another.
One approach is to consider how products could cancel in pairs to give a unit over $\F_2$ and take the presentation defining a corresponding universal group; this was used for instance to rule out the existence of non-trivial units $\alpha$ over $\F_2$ with $\csupp{\alpha} = 3$ and $\csupp{\alpha^{-1}} \leq 11$ or $\csupp{\alpha} = \csupp{\alpha^{-1}} = 5$ for any torsion-free group \cite[Proposition 4.12]{DykemaHeisterJuschenko15}.
Alternatively, one could work directly with a specific group, such as $P$.
This has surely been attempted before, and one can decide invertibility of $\alpha \in K[P]$ using a determinant condition \cite[Theorem 8.5]{CravenPappas13}, but the number of candidates is astronomical.
A key factor that makes the counterexample tick is that both it and its inverse are assembled from the same highly symmetric pieces (see Lemma~\ref{lem:symmetry}).
Note however that although Promislow's undocumented ``random search algorithm'' could have been restricted to $A = B$ without losing completeness \cite{Strojnowski80}, greatly constraining the search space, a unit $\alpha \neq 1$ in $\F_2 [P]$ cannot be self-inverse since then $(\alpha + 1)^2 = 0$.
The computational method used to find this unit via Boolean satisfiability will be discussed in \cite{Gardam21+}.

The zero divisor conjecture is known for $P$ so this counterexample does not directly suggest a line of attack.
If the zero divisor conjecture is true, then we have established that it is not true for the perhaps more combinatorial considerations of the unit conjecture.
On the other hand, if the zero divisor conjecture is false, then this paper at least removes one psychological impediment to finding a counterexample.

\section{The counterexample}
\label{section:counterexample}

\subsection{Setup}

In this paper we will work with the structure of $P$ as an extension of $\Z^3$ by a $\Z/2 \times \Z/2$ quotient.
In order to facilitate calculations, we will describe this extension very explicitly, including its defining cocycle (factor set) and action.
One could perform this computation in various ways; our approach is flavoured with Bass--Serre theory.

We adopt the convention of conjugation acting on the right: $s^t := t^{-1} s t$.
We introduce new variables $x = a^2$ and $y = b^2$ into the presentation \[
    \gp{x, b, y, a}{ x^b = x^{-1}, \, y^a = y^{-1}, \, x = a^2, \, b^2 = y }
\] and observe that this expresses $P$ as an amalgam of two Klein bottle groups, namely $\gp{x, b}{ x^b = x^{-1} }$ and $\gp{y, a}{y^a = y^{-1}}$, along their isomorphic index-2 $\Z^2$ subgroups $\gen{x, b^2} = \gen{a^2, y}$ (in particular, this shows that $P$ is torsion-free).
Being normal in each factor, this subgroup is normal in the amalgam, with corresponding quotient $D_\infty = \Z/2 * \Z/2$.
We pick $z = abab$ as a lift to $P$ of a generator of the infinite cyclic group $[D_\infty, D_\infty]$.
As $x^z = (a^2)^{(abab)} = (a^{-2})^{(ab)} = x$ and similarly $y^z = y$, we see that in fact $\gen{x, y, z} \cong \Z^3$ is the kernel of $P \to \Z/2 \times \Z/2$.

Write $Q$ for the quotient $\Z/2 \times \Z/2$.
The action of $P$ by conjugation on $\langle x, y, z \rangle$ induces an action of $Q$ in which the non-trivial elements act as conjugation by $a$, $b$ and $ab$.
The action of $a$ and $b$ on $\gen{x, y}$ can be read off the presentation; for the action on $z$ note that $z^{ab} = z$ and \[
    z^a z = b a b (a^2 b) a b = b a b (b a^{-2}) a b = b (a b^2) a^{-1} b = b (b^{-2} a) a^{-1} b = 1.
\]
Let us explicitly record the action for all 3 non-trivial elements of $Q$.
\begin{align*}
    x^a    &= x     &    y^a &= y^{-1} &    z^a &= z^{-1} \\
      x^b  &= x^{-1}&    y^b &= y      &    z^b &= z^{-1} \\
    x^{ab} &= x^{-1}& y^{ab} &= y^{-1} & z^{ab} &= z
\end{align*}
The set-theoretic section $\sigma \colon Q \to P$ with image $\{1, a, b, ab\}$ defines the cocycle $f \colon Q \times Q \to \Z^3$ by $f(g, h) = \sigma(g) \sigma(h) \sigma(gh)^{-1}$.
In order to compute it we just need to know how to push an $a$ past a $b$.
One of the defining relations tells us that $b^{-1} a^2 = a^{-2} b^{-1}$ and thus \[
    b a b^{-1} a^{-1}
    = b^2 (b^{-1} a^2) a^{-1} b^{-1} a^{-1}
    = b^2 (a^{-2} b^{-1}) a^{-1} b^{-1} a^{-1}
    = y x^{-1} z^{-1}
    = x^{-1} y z^{-1}.
\]
With this identity in hand we determine
\begin{center}
    \begin{tabular}{r | c c c c}
        $f(g, h)$ & $1$ & $a$ & $b$ & $ab$ \\
    \hline
        $1$  & $1$ & $1$ & $1$ & $1$ \\
        $a$  & $1$ & $x$ & $1$ & $x$ \\
        $b$  & $1$ & $x^{-1} y z^{-1}$ & $y$ & $x^{-1} z^{-1}$ \\
        $ab$ & $1$ & $y^{-1} z$ & $y^{-1}$ & $z$ \\
    \end{tabular}
\end{center}
where the table reads left-to-right (the rows give $g$ and the columns give $h$).

Let $K$ be a field.
Every element $\beta \in K[P]$ can be written uniquely as $(\beta)_1 + (\beta)_a a + (\beta)_b b + (\beta)_{ab} ab$ for 4 Laurent polynomials in $K[x^{\pm 1}, y^{\pm 1}, z^{\pm 1}]$.
Let us now record the general product in $K[P]$ in terms of such polynomials, which follows immediately from the above table giving the cocycle $f$.
If $\alpha = p + qa + rb + sab$ and $\alpha' = p' + q'a + r'b + s'ab$ for Laurent polynomials $p, q, r, s, p', q', r', s' \in K[x^{\pm 1}, y^{\pm 1}, z^{\pm 1}]$, then
    \begin{align*}
        (\alpha' \alpha)_1    &= p' p + x q' q^a + y r' r^b + z s' s^{ab} \\
        (\alpha' \alpha)_a    &= p' q + q' p^a   + x^{-1} z^{-1} r' s^b + y^{-1} s' r^{ab} \\
        (\alpha' \alpha)_b    &= p' r + x q' s^a + r' p^b + y^{-1} z s' q^{ab} \\
        (\alpha' \alpha)_{ab} &= p' s + q' r^a   + x^{-1} y z^{-1} r' q^b + s' p^{ab}.
    \end{align*}

\subsection{Piecewise symmetric units}

In order to express fully the symmetry exhibited by the constituent polynomials of the non-trivial unit, we need to introduce square roots into the polynomial ring.
Morally, one could see this as a form of fake torsion, since at the group level this would correspond to taking a finite index overgroup of $P$ which is no longer torsion-free.
(Note that we do not mean symmetry in the sense of symmetric polynomials, which is \emph{permutational}, but rather specific \emph{rotational} symmetry, as per the statement of the lemma.)

\begin{lem}
    \label{lem:symmetry}
    Let $K$ be a field and let the group $P$ and $a, b, x, y, z \in P$ be as in Theorem~\ref{thm:nontrivial_unit}.
    Let $p, q, r, s \in K[x^{\pm 1}, y^{\pm 1}, z^{\pm 1}]$ be Laurent polynomials in $x, y, z$.
    Adjoin square roots of $x$ and $y$ and suppose that $p_0, q_0, r_0, s_0 \in K[x^{\pm \half}, y^{\pm \half}, z^{\pm 1}]$ defined by $p_0 = x^{-\half} y^{-\half} p$, $q_0 = y^\half q$, $r_0 = x^{-\half} r$, $s_0 = s$ are all invariant under the action of $ab$ (i.e. under $x \mapsto x^{-1}, y \mapsto y^{-1}, z \mapsto z$).
    If the equations
    \begin{align}
        p_0^a s_0 - q_0 r_0^a + z^{-1} (p_0^a s_0 - q_0 r_0^a)^a &= 0 \label{align:ab} \\
        p_0 p_0^a - q_0 q_0^a - r_0 r_0^a + s_0 s_0^a &= 1 \label{align:1}
    \end{align}
    hold, then $p + qa + rb + sab$ is a unit in $K[P]$.
\end{lem}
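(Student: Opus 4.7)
The plan is to exhibit an explicit candidate inverse $\alpha' \in K[P]$ for $\alpha := p + qa + rb + sab$ and verify $\alpha' \alpha = 1$ via the product formula at the end of the preceding subsection. Specifically, I take
\[
    \alpha' := x^{-1} p^a - x^{-1} q \cdot a - y^{-1} r \cdot b + z^{-1} s^a \cdot ab,
\]
so that in the formula for $\alpha' \alpha$ the coefficients of $\alpha'$ are $p' = x^{-1} p^a$, $q' = -x^{-1} q$, $r' = -y^{-1} r$, $s' = z^{-1} s^a$. These prefactors are essentially forced by equation~\eqref{align:1}: after substituting $p = x^{\half} y^{\half} p_0$, $q = y^{-\half} q_0$, $r = x^{\half} r_0$, $s = s_0$, the four summands $p'p$, $xq'q^a$, $yr'r^b$, $zs's^{ab}$ of $(\alpha'\alpha)_1$ collapse precisely to $p_0 p_0^a$, $-q_0 q_0^a$, $-r_0 r_0^a$, $s_0 s_0^a$, summing to $1$ by~\eqref{align:1}.

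For the $a$-component $p' q + q' p^a + x^{-1} z^{-1} r' s^b + y^{-1} s' r^{ab}$, I split into two pairs. The first pair $x^{-1}(p^a q - q p^a)$ vanishes by commutativity of $K[\Z^3]$. For the second, I expand using $s_0^b = s_0^a$ (a consequence of $ab$-invariance, since on $ab$-invariants the $a$- and $b$-actions agree) and find both terms equal to $\pm x^{-\half} y^{-1} z^{-1} r_0 s_0^a$ with opposite signs, so they cancel. The $b$-component splits analogously into two pairs cancelling by commutativity, invoking instead $p_0^b = p_0^a$ and $r_0^b = r_0^a$.

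The $ab$-component $p' s + q' r^a + x^{-1} y z^{-1} r' q^b + s' p^{ab}$ is where equation~\eqref{align:ab} enters. After the substitutions, each summand acquires a common prefactor $x^{-\half} y^{-\half}$, and the residual combination is $(p_0^a s_0 - q_0 r_0^a) + z^{-1}(p_0 s_0^a - q_0^a r_0)$. The second bracket equals $(p_0^a s_0 - q_0 r_0^a)^a$, so this is precisely the left-hand side of~\eqref{align:ab}, hence zero. This completes $\alpha' \alpha = 1$; and since the zero divisor conjecture is known for $P$, $K[P]$ is a domain, so this left-invertibility automatically upgrades to two-sided (alternatively, $\alpha \alpha' = 1$ can be checked by a symmetric calculation), making $\alpha$ a unit.

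The main obstacle is arriving at the correct form of $\alpha'$ --- in particular, recognising that the prefactors $x^{-1}, -x^{-1}, -y^{-1}, z^{-1}$ are exactly what is needed to strip out the half-powers of $x$ and $y$ appearing through the substitution $p = x^{\half} y^{\half} p_0$ etc. Once this is in hand, the verification is a short bookkeeping exercise in $K[x^{\pm \half}, y^{\pm \half}, z^{\pm 1}]$, with the cancellations powered by the commutativity of $K[\Z^3]$, the $ab$-invariance of $p_0, q_0, r_0, s_0$, and the two hypothesised identities.
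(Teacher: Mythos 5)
Your proposal is correct and matches the paper's proof essentially verbatim: you choose the same candidate inverse $\alpha' = x^{-1}p^a - x^{-1}q\,a - y^{-1}r\,b + z^{-1}s^a\,ab$, verify the four components of $\alpha'\alpha$ by the same substitution $p = x^{\half}y^{\half}p_0$, etc., reducing the $1$- and $ab$-components to equations \eqref{align:1} and \eqref{align:ab} and the $a$- and $b$-components to identities forced by $ab$-invariance, and invoke the zero divisor theorem for $P$ to upgrade one-sided to two-sided invertibility. (Only a minor slip: in the $b$-component the second cancellation uses $q_0^{ab}=q_0$ rather than $r_0^b=r_0^a$, but this has no bearing on the argument.)
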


\begin{proof}
    Let $\alpha = p + qa + rb + sab$ and $\alpha' = p' + q'a + r'b + s'ab$.
    We shall prove that the following choices $p', q', r', s'$ make $\alpha'$ inverse to $\alpha$.
    Since $P$ is virtually abelian it satisfies the zero divisor conjecture \cite[Theorem 2]{Cliff80}, so it suffices to check that $\alpha' \alpha = 1$ (as this implies that $\alpha' (\alpha \alpha' - 1) = 0$ and thus $\alpha \alpha' = 1$).
    In this table we also record the action of $ab$ and $b$ on the polynomials $p, q, r, s$, which is an immediate consequence of the invariance of $p_0, q_0, r_0$ and $s_0$ under the action of $ab$.
    \begin{align*}
        p' &:= x^{-1} p^a    &  p^{ab} &= x^{-1} y^{-1} p  & p^b &= x^{-1} y p^a \\
        q' &:= -x^{-1} q     &  q^{ab} &= y q              & q^b &= y^{-1} q^a   \\
        r' &:= -y^{-1} r     &  r^{ab} &= x^{-1} r         & r^b &= x^{-1} r^a   \\
        s' &:= z^{-1} s^a    &  s^{ab} &= s                & s^b &= s^a
    \end{align*}
    The invariance encoded in these last two columns does half of the computation of $\alpha' \alpha$: substituting in immediately gives
    \begin{align*}
        (\alpha' \alpha)_a &= x^{-1} p^a q - x^{-1} q p^a - x^{-1} z^{-1} y^{-1} r s^a + y^{-1} z^{-1} s^a x^{-1} r = 0, \\
        (\alpha' \alpha)_b &= x^{-1} p^a r - x x^{-1} q s^a - y^{-1} r x^{-1} y p^a + y^{-1} z z^{-1} s^a y q = 0.
    \end{align*}
    For the remaining two components, we have
    \begin{align*}
        (\alpha' \alpha)_{ab}
        &= x^{-1} p^a s - x^{-1} q r^a - x^{-1} y z^{-1} y^{-1} r y^{-1} q^a + z^{-1} s^a x^{-1} y^{-1} p \\
        &= x^{-1} (p^a s - q r^a - y^{-1} z^{-1} r q^a + y^{-1} z^{-1} s^a p) \\
        &= x^{-1} (x^\half y^{-\half} p_0^a s_0 - y^{-\half} q_0 x^\half r_0^a - y^{-1} z^{-1} x^\half r_0 y^{\half} q_0^a + y^{-1} z^{-1} s_0^a x^\half y^\half p_0) \\
        &= x^{-\half} y^{-\half} (p_0^a s_0 - q_0 r_0^a - z^{-1} r_0 q_0^a + z^{-1} s_0^a p_0) \\
        &= x^{-\half} y^{-\half} (p_0^a s_0 - q_0 r_0^a + z^{-1} (p_0^a s_0 - q_0 r_0^a)^a) \\
        &= 0 \\
    \noalign{\noindent and}
        (\alpha' \alpha)_{1}
        &= x^{-1} p^a p - x x^{-1} q q^a - y y^{-1} r x^{-1} r^a + z z^{-1} s^a s \\
        &= x^{-1} x^\half y^{-\half} p_0^a x^\half y^\half p_0 - y^{-\half} q_0 y^\half q_0^a - x^{-1} x^\half r_0 x^\half r_0^a + s_0^a s_0 \\
        &= p_0 p_0^a - q_0 q_0^a - r_0 r_0^a + s_0 s_0^a \\
        &= 1.
    \end{align*}
\end{proof}

\subsection{Proofs of Theorem~\ref{thm:nontrivial_unit} and Corollary~\ref{cor:big_group_of_units}}

\begin{proof}[Proof of Theorem~\ref{thm:nontrivial_unit}]
    For notational convenience, let $v = x^\half$ and $w = y^\half$, and write $\ov = v^{-1} + v$, $\ow = w^{-1} + w$ and $\oz = z^{-1} + z$.
    We simply need to verify that the polynomials
    \begin{align*}
        p_0 &= \ov \ow (1 + z^{-1}) \\
        q_0 &= v^{-2} w^{-1} + v^2 w + \ow z \\
        r_0 &= \ov + (v^{-1} w^{-2} + v w^2) z \\
        s_0 &= 1 + (v^{-2} + v^2 + w^{-2} + w^2) z^{-1} \\
            &= 1 + (\ov^2 + \ow^2) z^{-1}
    \end{align*}
    satisfy Equations~\eqref{align:ab} and \eqref{align:1} of Lemma~\ref{lem:symmetry}, since they evidently have the required symmetry under simultaneous inversion of $v$ and $w$ (that is, the action by $ab$).
    Note that since we are working in characteristic $2$, $\ov^2 = (v^{-1} + v)^2 = v^{-2} + v^2$.

    Let $\xi := p_0^a s_0 - q_0 r_0^a = p_0^a s_0 + q_0 r_0^a$ in characteristic 2.
    Define the map $\psi \colon \F_2 [v^{\pm 1}, w^{\pm 1}, z^{\pm 1}] \to \F_2 [v^{\pm 1}, w^{\pm 1}, z^{\pm 1}]$ by $\psi(\eta) = z^{-1} \eta^a$.
    We show that $\psi(\xi) = \xi$, so that Equation~\eqref{align:ab} holds as claimed, by modifying each of the summands of $\xi$ by $\epsilon := \ov \ow (1 + z)$.
    Note first that
    \begin{align*}
        p_0^a s_0 + \epsilon
        &= \ov \ow (1 + z) (1 + (\ov^2 + \ow^2) z^{-1}) + \ov \ow (1 + z) \\
        &= \ov \ow (1 + z^{-1}) (\ov^2 + \ow^2)
    \end{align*}
    is invariant under $\psi$.
    Write $\gamma = v^{-2} w^{-1} + v^2 w$ and $\delta = v^{-1} w^{-2} + v w^2$.
    The invariance of
    \begin{align*}
        q_0 r_0^a + \epsilon
        &= (\gamma + \ow z)(\ov + \delta^a z^{-1}) + \ov \ow (1 + z) \\
        &= (\ov \gamma + \ow \delta^a + \ov \ow)z^0 + \gamma \delta^a z^{-1}
    \end{align*}
    under $\psi$ now follows from
    \begin{align*}
        \ov \gamma + \ow \delta^a + \ov \ow
        &= (v^{-1} + v)(v^{-2} w^{-1} + v^2 w) + (w^{-1} + w) (v^{-1} w^2 + v w^{-2}) + \ov \ow \\
        &= v^{-3} w^{-1} + v^3 w + v w^{-3} + v^{-1} w^3 + 2 \ov \ow \\
        &= (v^{-2} w + v^2 w^{-1}) (v^{-1} w^{-2} + v w^2) \\
        &= \gamma^a \delta.
    \end{align*}
    Thus $\psi(\xi) = \psi(p_0^a s_0 + \epsilon + q_0 r_0^a + \epsilon) = \xi$, establishing Equation~\eqref{align:ab}.

    We observe that \[
        \gamma \gamma^a = (v^{-2} w^{-1} + v^2 w) (v^{-2} w + v^2 w^{-1}) = \ov^4 + \ow^2
    \] and similarly $\delta \delta^a = \ow^4 + \ov^2$. Hence we can compute
    \begin{align*}
        p_0 p_0^a
            &= \ov \ow (1 + z^{-1}) \ov \ow ( 1 + z) \\
            &= \ov^2 \ow^2 \oz \\
        q_0 q_0^a
        &= (\gamma + \ow z) (\gamma^a + \ow z^{-1}) \\
            &= (\ov^4 + \ow^2) + \ow (\gamma z^{-1} + \gamma^a z) + \ow^2 \\
            &= \ov^4 + \ow (\gamma z^{-1} + \gamma^a z) \\
        r_0 r_0^a
        &= (\ov + \delta z) (\ov + \delta^a z^{-1}) \\
        &= \ov^2 + \ov (\delta^a z^{-1} + \delta z) + (\ow^4 + \ov^2) \\
        &= \ow^4 + \ov (\delta^a z^{-1} + \delta z) \\
        s_0 s_0^a
        &= (1 + (\ov^2 + \ow^2) z^{-1}) (1 + (\ov^2 + \ow^2) z) \\
        &= 1 + (\ov^2 + \ow^2)^2 + (\ov^2 + \ow^2) \oz \\
        &= 1 + \ov^4 + \ow^4 + (\ov^2 + \ow^2) \oz.
    \end{align*}

    Since
    \begin{align*}
        \ow \gamma + \ov \delta^a
        &= (w^{-1} + w) (v^{-2} w^{-1} + v^2 w) + (v^{-1} + v)(v^{-1} w^2 + v w^{-2}) \\
        &= v^{-2} w^{-2} + v^2 + v^{-2} + v^2 w^2 + v^{-2} w^2 + w^{-2} + w^2 + v^2 w^{-2} \\
        &= \ov^2 \ow^2 + \ov^2 + \ow^2 \\
        &= \ow \gamma^a + \ov \delta
    \end{align*}
    we see that $q_0 q_0^a + r_0 r_0^a = \ov^4 + \ow^4 + (\ov^2 \ow^2 + \ov^2 + \ow^2) \oz$.
    Thus
    \begin{align*}
        & p_0 p_0^a + (q_0 q_0^a + r_0 r_0^a) + s_0 s_0^a \\
        &= \ov^2 \ow^2 \oz + (\ov^4 + \ow^4 + (\ov^2 \ow^2 + \ov^2 + \ow^2) \oz) + (1 + \ov^4 + \ow^4 + (\ov^2 + \ow^2) \oz) \\
        &= 1
    \end{align*}
    completing the verification of Equation~\eqref{align:1} and thus the proof.
\end{proof}

\begin{proof}[Proof of Corollary~\ref{cor:big_group_of_units}]
    We first observe that the group $E = (\F_2 [P])^\times$ of units is torsion-free and linear (which is unrelated to our counterexample and presumably already known).
    Torsion-freeness follows quickly from the absence of zero divisors.
    Indeed, if $\beta \neq 1$ and $\beta^n = 1$ then factorizing $\beta^n - 1$ implies that $\beta^{n-1} + \dots + 1$ is zero.
    If $n = 2$ this is just $\beta = 1$ and if $n$ is odd then this gives a contradiction after applying the augmentation map $\F_2 [P] \to \F_2$.
    It is a classical fact that if $H$ is an index $n$ subgroup of $G$ then $K[G] \subseteq M_n(K[H])$ \cite[Lemma 5.1.10]{Passman85} which in our case restricts to $E \leq \operatorname{GL}_4 (\F_2 [x^{\pm 1}, y^{\pm 1}, z^{\pm 1}])$ so $E$ is linear over the field $\F_2(x, y, z)$.

    We will now consider an infinite dihedral quotient of $P$ to verify that $E$ is infinitely generated and has free subgroups.
    Let $D_\infty = \gp{t, b}{b^2 = 1, \, t^b = t^{-1}}$.
    The group of units of $\F_2 [D_\infty]$ was determined by Mirowicz \cite[Theorem 4.1]{Mirowicz91} and we now recall its description.
    For $i \in \N^+$ and $j \in \Z$ let \[
        e_{ij} = t^{-i} + 1 + t^{i} + t^j(t^{-i} + t^i)b.
    \]
    Then $\{ e_{ij} : i \in \N^+ \}$ is a basis for an elementary abelian 2-group $U_j \cong \oplus_{i \in \N^+} \Z/2$ and the subgroup generated by all $e_{ij}$ is $*_{j \in \Z} U_j$.
    The trivial units $D_\infty$ normalize this free product, acting by $e_{i,j}^b = e_{i,-j}$ and $e_{i,j}^t = e_{i,j-2}$.
    The full group of units is in fact the semidirect product \[
        (\F_2 [D_\infty])^\times = (*_{j \in \Z} U_j) \rtimes D_\infty.
    \]
    Note that mapping $e_{i,j} \mapsto e_{i, 0}, t \mapsto 1, b \mapsto 1$ gives a well-defined retraction $(\F_2 [D_\infty])^\times~\to~U_0$; since $U_0$ is elementary abelian, this implies that $(\F_2 [D_\infty])^\times$ retracts onto any subgroup of $U_0$.

    For any unit given by Lemma~\ref{lem:symmetry}, we can replace $x^\half$ with $x^{k+\half}$ in the 4 polynomials $p_0, q_0, r_0, s_0$ and still have a solution to the required equations (in which the occurrence of $z$ is the only non-homogeneity) and in the case of Theorem~\ref{thm:nontrivial_unit} this gives a family of units
    \begin{align*}
        \alpha_k
            &:= (x^{-k} + x^{k+1})(1 + y)(1 + z^{-1}) + \left( x^{-2k-1}y^{-1} + x^{2k+1} + (y^{-1} + 1) z \right)a \\
            &+ \left( x^{-k} + x^{k+1} + (x^{-k} y^{-1} + x^{k+1} y)z \right) b + \left( 1 + (x^{-2k-1} + x^{2k+1} + y^{-1} + y)z^{-1} \right) ab
    \end{align*}
    modelled on $\alpha = \alpha_0$.
    The surjection $\pi \colon P \to D_\infty \colon a \mapsto t, b \mapsto b$, with $\pi(x) = t^2$ and $\pi(y) = \pi(z) = 1$, induces a ring homomorphism $\pi \colon \F_2 [P] \to \F_2 [D_\infty]$ and the unit $\alpha_k$ is mapped to \[
        \pi (\alpha_k) = 0 + (t^{-4k-2} + t^{4k+2}) t + (t^{-2k} + t^{2k+2})(1 + 1)b + (1 + t^{-4k-2} + t^{4k+2}) t b
    \] and thus $\pi(a^{-1} \alpha_k b) = e_{4k+2,0}$.
    Thus the image $\pi(E) \subseteq (\F_2 [D_\infty])^\times$ contains a subgroup isomorphic to $\oplus_{\N^+} \Z/2$ onto which $(\F_2 [D_\infty])^\times$ retracts, so $E$ is not finitely generated.
    Indeed, we have shown that $H^1(E, \F_2)$ is infinite dimensional.

    Finally, we can for instance take the conjugate units $\alpha b a^{-1}$, $a^{-1} \alpha b$ and $a^{-2} \alpha b a$, whose images in $(\F_2 [D_\infty])^\times$ are $e_{2,2}$, $e_{2,0}$ and $e_{2,-2}$, generating $\Z/2 * \Z/2 * \Z/2$, so $E$ contains non-abelian free subgroups.
\end{proof}

\section{Discussion}
\label{section:open_problems}

This paper closes the question of whether the unit conjecture holds for all torsion-free groups but opens up the study of the corresponding groups of units; Corollary~\ref{cor:big_group_of_units} is a first step in this direction, but determining all units in $\F_2 [P]$ seems extremely difficult, especially since the group of units is not finitely generated and there is no apparent reason why all units should be attainable from this counterexample and its obvious variants.
Finding non-trivial units for the group $P$ in characteristic zero remains an outstanding challenge.

The group $P$ is in many senses the simplest group where the unit conjecture could fail.
Although the construction in this paper is highly specialized, it should nonetheless be possible to find non-trivial units for torsion-free groups that do not contain $P$ as a subgroup.
This brings the unique product property into focus.
Torsion-free groups without unique products constructed via small cancellation arguments, such as in \cite{RipsSegev87,Steenbock15,GruberMartinSteenbock15,ArzhantsevaSteenbock14}, pose a difficulty for hands-on computational approaches, since the presentations are correspondingly large, and we know very few examples beyond these \cite{Carter14,Soelberg18}.

A better, more geometric understanding of the unique product property would be illuminating.
The property of being \emph{diffuse}, introduced by Bowditch \cite{Bowditch00}, is a consequence of left-orderability and implies the unique product property.
Dunfield gave the first example of a diffuse group which is not left-orderable \cite[Theorem A.1]{KionkeRaimbault16}.
\begin{qn}[{\cite[Question 1]{KionkeRaimbault16}}]
    Does there exist a group which is not diffuse but has unique products?
\end{qn}

Distinguishing the unique product property from the unit conjecture is likewise outstanding.
\begin{qn}
    Does there exist a torsion-free group $G$ without the unique product property such that $K[G]$ satisfies the unit conjecture, for some field $K$?
\end{qn}

\subsection*{Acknowledgements}

This work was supported by the Deutsche Forschungsgemeinschaft (DFG, German Research Foundation) -- Project-ID 427320536 -- SFB 1442, as well as under Germany's Excellence Strategy EXC 2044--390685587, Mathematics M\"unster: Dynamics--Geometry--Structure and within the Priority Programme SPP2026 ``Geometry at Infinity''.

I thank Arthur Bartels for helpful conversations and Martin Bridson for comments on a draft.
I am grateful to Kenneth A. Brown and Sven Raum for bringing to my attention that Higman formulated the conjecture.
I became aware of Mirowicz's paper via Yves Cornulier and Salvatore Siciliano on MathOverflow.
I thank the anonymous referee who made insightful comments on the context of this paper.

\bibliography{units}{}
\bibliographystyle{alpha}

\end{document}